\newtheorem{theorem}{Theorem}[section]
\newtheorem{assump}[theorem]{Assumption}
\theoremstyle{definition}
\newtheorem{definition}[theorem]{Definition}
\newtheorem{remark}{Remark}
\newtheorem*{notation}{Notation}
\title[A sufficient optimality condition]{A sufficient optimality 
condition for delayed state-linear optimal control problems}
\author[A. P. Lemos-Pai\~{a}o, C. J. Silva and D. F. M. Torres]{}
\subjclass{Primary: 49K15; Secondary: 34H99}
\keywords{Delayed optimal control problems, delayed state-linear control systems, 
time delays in state and control variables, sufficient optimality condition, 
augmented problem.}
\email{anapaiao@ua.pt}
\email{cjoaosilva@ua.pt}
\email{delfim@ua.pt}
\thanks{This work is part of first author's Ph.D., 
which is carried out at the University of Aveiro.}
\thanks{$^*$Corresponding author: Delfim F. M. Torres (delfim@ua.pt)}
\begin{document}

\maketitle

\centerline{\scshape Ana P. Lemos-Pai\~{a}o, Cristiana J.~Silva and Delfim F.~M.~Torres$^*$}
\medskip
{\footnotesize
\centerline{Center for Research and Development in Mathematics and Applications (CIDMA)}
\centerline{Department of Mathematics, University of Aveiro, 3810-193 Aveiro, Portugal}}

\bigskip


\begin{abstract}
We give answer to an open question by proving 
a sufficient optimality condition for state-linear optimal control problems 
with time delays in state and control variables.
In the proof of our main result, we transform a delayed state-linear optimal 
control problem to an equivalent non-delayed problem. 
This allows us to use a well-known theorem that ensures a sufficient optimality 
condition for non-delayed state-linear optimal control problems.  
An example is given in order to illustrate the obtained result.
\end{abstract}


\section{Introduction}

Time delays occur in many dynamical systems such as biological, 
chemical, mechanical and economical systems (see, e.g., 
\cite{Bashier:2017,Elaiw,Gollmann2,Klamka:2016,Santos,Stumpf,Xia:2009,Xu2,Xu}). 
Dynamic systems with time delays, in both state and control variables, play 
an important role in the modelling of real-life phenomena in various fields 
of applications \cite{Gollmann,Gollmann2}. For instance, in \cite{Rocha} 
the incubation and pharmacological delays are modelled through the introduction 
of time delays in both state and control variables. In \cite{Silva},
Silva, Maurer and Torres introduce time delays in the state and control variables 
for tuberculosis modelling. They represent the time delay on the diagnosis 
and commencement of treatment of individuals with active tuberculosis infection 
and the delays on the treatment of persistent latent individuals, due to clinical 
and patient reasons. There is a vast literature on delayed optimal control problems, 
also called retarded, time-lag, or hereditary optimal control problems. See, e.g.,
\cite{Banks,Boccia1,Friedman,Gollmann,Halanay,Oguztoreli} and references cited therein.

Delayed linear differential systems have also been investigated, their importance 
being recognized both from a theoretical and practical points of view. For instance, 
in \cite{Friedman} Friedman considers linear hereditary processes and apply to them
Pontryagin's method, deriving necessary optimality conditions as well as existence and 
uniqueness results. Analogously, in \cite{Oguztoreli} linear delayed 
differential equations and optimal control problems involving this kind of systems 
are studied. Since these first works, many researchers have devoted their attention 
to linear quadratic optimal control problems with time delays, see, e.g., \cite{Cacace:SCL:2016,Delfour:SIAM,Eller:TAC:1970,Khellat:JOTA:2009,Palanisamy:IEEE:1983}. 
It turns out that for linear quadratic delayed optimal control problems 
it is possible to provide an explicit formula for the optimal controls 
\cite{Cacace:SCL:2016,Khellat:JOTA:2009,Palanisamy:IEEE:1983}.

Optimal control problems with a differential system that is 
linear both in state and control variables have been studied in
\cite{Cacace:SCL:2016,Chyung,Delfour:SIAM,Eller:TAC:1970,Khellat:JOTA:2009,%
Koepcke,Koivo,Lee1,Oguztoreli_2,Palanisamy:IEEE:1983}. 
In \cite{Delfour:SIAM,Koepcke,Palanisamy:IEEE:1983}, 
the system is delayed with respect to state and control variables. 
In \cite{Chyung,Oguztoreli_2}, the system only considers delays in the state variable. 
Chyung and Lee derive necessary and sufficient optimality conditions in \cite{Chyung} 
while O\v{g}uzt\"{o}reli only proves necessary conditions \cite{Oguztoreli_2}. 
Certain necessary conditions analysed by Chyung and Lee in \cite{Chyung} 
have been already derived in \cite{Kharatishvili_2,Pontryagin,Popov}. However,
the system considered in \cite{Chyung} is different from the previously studied
hereditary systems, which do not require a initial function of state. 
In \cite{Eller:TAC:1970}, Eller et al. derive a sufficient condition for a control 
to be optimal for certain problems with time delay. The problems studied 
by Eller et al. and Khellat, respectively in \cite{Eller:TAC:1970} and \cite{Khellat:JOTA:2009},  
consider only one constant lag in the state. The research done by Lee in \cite{Lee1} 
is different from ours, because in \cite{Lee1} the aim is to minimize a cost functional, 
which does not consider delays, subject to a differential system that is linear 
in state and control variables, and to another constraint. In their differential 
system, the state variable depends on a constant and fixed delay and the control 
variable depends on a constant lag, which is not specified a priori. Note that the 
differential system of the problem considered in \cite{Koivo} is similar 
to the one of \cite{Lee1}. Although Banks has studied non-linear delayed 
problems without lags in the control, he has also analyzed problems that 
are linear and delayed with respect to control \cite{Banks}. Recently, 
Cacace et al. studied optimal control problems that involve linear differential 
systems with variable delays only in the control \cite{Cacace:SCL:2016}. 
The problems analyzed in the present paper are different from those considered 
in the mentioned works, because here the problems involve differential 
systems that are linear with respect to state, but not with respect to the control. 
Furthermore, we consider a constant lag in the state and another one in the control. 
These two delays are in general not equal.

In \cite{Hughes}, Hughes firstly consider variational problems with only 
one constant lag and derive various necessary and a sufficient optimality 
condition for them. The variational problems in \cite{Hughes} can easily 
be transformed to control problems with only one constant delay (see, e.g., 
\cite[p.~53--54]{Paiao}). Hughes also investigate an optimality condition for 
a control problem with a constant delay, which is the same for state and control. 
Therefore, the problems investigated in \cite{Hughes} are different from the problems 
studied by us, because in the present paper the delay of state is not necessarily 
equal to the delay of control. The problems analyzed by Chan and Yung \cite{Chan}
and by Sabbagh \cite{Sabbagh} are similar to the first problems studied 
by Hughes in \cite{Hughes}. So, for the same reason, the problems investigated 
in \cite{Chan,Sabbagh} are different from ours. The problems considered in 
\cite{Hughes,Sabbagh} are also considered in \cite{Palm} 
by Palm and Schmitendorf. For such problems, they derive two conjugate-point conditions, 
which are not equivalent. Note that their conditions are only necessary  
and do not give a set of sufficient conditions \cite{Palm}.

In \cite{Jacobs}, Jacobs and Kao investigate delayed problems 
that consist to minimize a cost functional without delays subject 
to a differential system defined by a non-linear function 
with a delay in state and another one in the control. Similar to
our case, these delays do not have to be equal. In contrast,
our cost functional contains also time delays, therefore
being more general than the one considered in \cite{Jacobs}.
Jacobs and Kao transform the problem using a Lagrange-multiplier technique 
and prove a regularity result in the form of a controllability condition, 
as well as some necessary optimality conditions. Then, in some special
restricted cases, they prove existence, uniqueness and sufficient conditions. 
Such restricted problems consider a differential system that is linear 
in state and in control variables. Thus, the sufficient conditions 
of \cite{Jacobs} are derived for problems that are less general than ours.

The delayed optimal control problems analyzed by Schmitendorf
in \cite{Schmitendorf} have a cost functional and a differential 
system that are more general than ours. However, in \cite{Schmitendorf} 
the control takes its values in all $\mathbb{R}^m$ while in the present paper 
the control values belong to a set $\Omega\subset\mathbb{R}^m$, 
$m\in\mathbb{N}$. In \cite{Lee3}, Lee and Yung study a problem 
that is similar to the one considered in \cite{Schmitendorf}, 
where the control belongs to a subset of $\mathbb{R}^m$, as we consider here. 
First and second-order sufficient conditions are shown in \cite{Lee3}. 
Nevertheless, the conditions of \cite{Lee3} are not constructive 
and practical for the computation of the optimal solution. 
Indeed, as hypothesis, it is assumed existence of a symmetric matrix 
under some conditions, for which is not given a method to calculate 
its expression. Another similar problem to our is studied by Bokov 
in \cite{Bokov}, in order to arise a necessary optimality condition 
in an explicit form. Moreover, a solution to the problem with 
infinite time horizon is given in \cite{Bokov}. In contrast, 
in the present paper we are interested to derive sufficient optimality conditions. 
As it is well known, and as Hwang and Bien write in \cite{Hwang}, 
many investigations have directed their efforts to seek sufficient conditions 
for control problems with delays: see, e.g., 
\cite{Chyung,Eller:TAC:1970,Hughes,Jacobs,Lee3,Schmitendorf}. 
In \cite{Hwang}, Hwang and Bien prove a sufficient condition for problems 
involving a differential affine time-delay system with 
the same lag for the state and the control. Thus, the differential 
system considered in the present article is obviously more general. 
In 1996, Lee and Yung derived various first and second-order sufficient 
conditions for non-linear optimal control problems, with only a constant 
delay in the state, and considering functions that do not have to be convex 
\cite{Lee2}. As in \cite{Chan,Lee3}, second-order sufficient 
conditions are shown to be related to the existence of solutions 
of a Riccati-type matrix differential inequality.

Optimal control problems with multiple delays have also been investigated. 
In \cite{Halanay}, Halanay derive necessary conditions for some optimal 
control problems with various time lags in state and control variables, 
using the abstract multiplier rule of Hestenes \cite{Hestenes}. 
In \cite{Halanay}, all delays related to state are equal to each other 
and the same happens with the delays associated to the control. 
Note that the results of \cite{Friedman,Kharatishvili_2} are obtained 
as particular cases of problems considered in \cite{Halanay}. Later, in 1973, 
a necessary condition is derived for an optimal control problem 
that involves multiple constant lags only in the control. This delayed 
dependence occurs both in the cost functional and in the differential 
system, which is defined by a non-linear function \cite{Soliman}. 
In \cite{Kharatishvili_3}, Kharatishvili and Tadumadze prove the existence 
of an optimal solution and a necessary condition for optimal control systems 
with multiple variable time lags in the state and multiple variable 
commensurable time delays in the control. Later, an optimal control 
problem where the state variable is solution of an integral equation 
with multiple delays, both for state and control variables, 
is studied by Bakke in \cite{Bakke}. Furthermore, necessary 
conditions and Hamilton--Jacobi equations are derived. 
In 2013, Boccia, Falugi, Maurer and Vinter derived necessary conditions 
for a free end-time optimal control problem subject to a non-linear 
differential system with multiple delays in the state \cite{Boccia1}. 
The control variable is not influenced by time lags in \cite{Boccia1}. 
Recently, in 2017, Boccia and Vinter obtained necessary conditions for 
a fixed end-time problem with a constant and unique delay for all variables,
as well as free end-time problems without control delays \cite{Boccia2}.

As Guinn wrote, the classical methods of obtaining necessary conditions 
for retarded optimal control problems (used, for instance, by Halanay 
in \cite{Halanay}, Kharatishvili in \cite{Kharatishvili} 
and O\v{g}uzt\"{o}reli in \cite{Oguztoreli}) require complicated 
and extensive proofs \cite{Guinn} (see, e.g., 
\cite{Banks,Friedman,Halanay,Kharatishvili,Oguztoreli}). In $1976$, 
Guinn proposed a method whereby we can reduce some specific time-lag 
optimal control problems to equivalent and augmented optimal control 
problems without delays \cite{Guinn}. By reducing delayed optimal control 
problems into non-delayed ones, we can then use well-known theorems 
applicable for optimal control problems without delays to derive 
desired optimality conditions for delayed problems \cite{Guinn}. 
In \cite{Guinn}, Guinn study specific optimal control problems with a constant 
delay in state and control variables. These two delays are equal. 
Later, in $2009$, G\"{o}llmann, Kern and Maurer studied optimal control problems 
with a constant delay in state and control variables subject to 
mixed control-state inequality constraints \cite{Gollmann}. In that 
research, the delays do not have to be equal. For technical reasons, 
the authors need to assume that the ratio between these two time delays 
is a rational number \cite{Gollmann}. In \cite{Gollmann}, the method used 
by Guinn in \cite{Guinn} is generalized and, consequently, a non-delayed optimal 
control problem is obtained again. Pontryagin's Minimum Principle, 
for non-delayed control problems with mixed state-control constraints, 
is used and first-order necessary optimality conditions are derived 
for retarded problems \cite{Gollmann}. Furthermore, G\"{o}llmann, 
Kern and Maurer discuss the Euler discretization for the retarded 
problem and some analytical examples versus correspondent numerical 
solutions are given. Later, in 2014, G\"{o}llmann and Maurer generalized 
the research mentioned before, by studying optimal control problems 
with multiple and constant time delays in state and control, 
involving mixed state-control inequality constraints \cite{Gollmann2}. 
Again, necessary optimality conditions are derived \cite{Gollmann2}. 
Note that the works \cite{Gollmann,Gollmann2,Guinn,Halanay} 
consider non-linear delayed differential systems.

In the present paper, we consider optimal control problems 
that consist to minimize a delayed non-linear cost functional 
subject to a delayed differential system that is linear 
with respect to state, but not with respect to control. 
The delay in the state is the same for the cost functional 
and for the differential system. The same happens with 
the time lag of the control variable. We derive 
a sufficient optimality condition for this type of problems. 
Note that the cost functional does not have to be quadratic, 
but it satisfies some continuity and convexity assumptions. 
To the best of our knowledge, this gives answer to an open question. 
Note that the constant delays on the state and control variables 
do not have to be equal, but we ensure the commensurability assumption 
between state and control delays, similarly
to G\"{o}llmann, Kern and Maurer in \cite{Gollmann}. 
Indeed, we follow the approach  of \cite{Gollmann} 
and Guinn \cite{Guinn}, that is, we transform 
the delayed optimal control problem into an equivalent 
non-delayed optimal control problem and then apply a classical 
sufficient optimality condition \cite[p.~340--343]{Lee}.

The paper is organized as follows. In Section~\ref{sect_preliminaries}, 
we define the optimal control problem without delays for which 
the sufficient optimality condition \cite[p.~340--343]{Lee} holds. 
In Section~\ref{sec:delayOC}, we define our retarded optimal control 
problem with constant time delays in state and control variables. 
Then, in Section~\ref{sect_mainres}, we prove a sufficient optimality 
condition for the problem stated in Section~\ref{sec:delayOC}. 
A concrete example is solved in detail in Section~\ref{sect_example},
with the purpose to illustrate our main result. We end with some 
conclusions in Section~\ref{sec:conclusion}.


\section{Non-delayed state-linear optimal control problem}
\label{sect_preliminaries}

We begin by defining a non-delayed state-linear optimal control problem 
and recall a well-known sufficient optimality result 
for such class of problems.

Consider the non-delayed state-linear optimal control problem (LP) which consists to
\begin{equation}
\label{eq:linear:cost}
\min\ \ C[u]=\int_{a}^{b}f^0(t,x(t))+g^0(t,u(t))dt
\end{equation}
subject to the control system in $\mathbb{R}^n$
\begin{equation}
\label{eq:linear:CS}
\dot{x}(t)=A(t)x(t)+g(t,u(t))
\end{equation}
with initial boundary condition
\begin{equation}
\label{eq:InitCond}
x(a) = x_a
\end{equation}
and final boundary condition $x(b)\in\Pi$; 
where $\Pi\subseteq\mathbb{R}^n$ is a closed convex set,
$x(t)\in\mathbb{R}^n$, $u(t)\in\Omega\subseteq\mathbb{R}^m$ 
and $A(t)$ is a real $n \times n$ matrix, $t \in [a, b]$. 
Functions $f^0$, $\partial_2 f^0$ and $g^0$ are assumed to be 
continuous for all $(t,x,u)\in[a,b]\times\mathbb{R}^{n+m}$.

\begin{notation}
Along the text we use the notation $\partial_if$ to denote the partial 
derivative of a certain function $f$ with respect to its $i$th argument. 
For example, $\displaystyle\partial_2f^0=\frac{\partial f^0}{\partial x}$.
\end{notation}

\begin{definition}
An admissible process to (LP) is given by a pair of functions 
$(x,u)\in W^{1,\infty}([a,b],\mathbb{R}^n)\times L^{\infty}([a,b],\mathbb{R}^m)$ 
that satisfies conditions \eqref{eq:linear:CS} and \eqref{eq:InitCond}.
\end{definition}

The following theorem gives a sufficient optimality condition for problem (LP).

\begin{theorem}[See Theorem~5, Section~5.2 of \cite{Lee}] 
\label{theo_suf_linear}
Consider problem (LP) and assume that
\begin{enumerate}
\item functions $f^0$, $\partial_2 f^0$, $g^0$, $A$ and $g$ 
are continuous for all $(t,x,u)\in[a,b]\times\mathbb{R}^{n+m}$;

\item $f^0(t,x)$ is a convex function in $x$ for each fixed $t\in[a,b]$;

\item for almost all $t\in[a,b]$, $u^*$ is a control with response 
$x^*$ that satisfies the \emph{maximality condition}
\begin{equation*}
H(t,x(t),u^*(t),\eta(t))=\max_{u\in\Omega}H(t,x(t),u,\eta(t)),
\end{equation*}
where
\begin{equation*}
H(t,x,u,\eta)=-[f^0(t,x)+g^0(t,u)]+\eta [A(t)x+g(t,u)],
\end{equation*}
and $\eta(t)$ is any nontrivial solution of the \emph{adjoint system}
\begin{equation*}
\dot{\eta}(t)=\partial_2 f^0(t,x^*(t))-\eta(t) A(t),
\end{equation*}
satisfying the \emph{transversality condition} that ensures that $\eta(b)$ 
is an inward normal vector of $\Pi$ at the boundary point $x^*(b)$.
\end{enumerate}
Then, $u^*$ is an optimal control that leads 
to the minimal cost $C[u^*]$.
\end{theorem}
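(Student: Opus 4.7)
The plan is to show $C[u] \geq C[u^*]$ for every admissible process $(x,u)$, by constructing a pointwise lower bound on the integrand of $C[u]-C[u^*]$ that, after integration, telescopes into a single boundary term rendered nonnegative by the transversality assumption. The backbone is the classical device of pairing convexity of the state running cost with an adjoint-based integration by parts.

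Concretely, I would split $C[u]-C[u^*]$ into its state part and its control part. For the state part, hypothesis (2) supplies the subgradient inequality
\[
f^0(t,x(t)) - f^0(t,x^*(t)) \;\geq\; \partial_2 f^0(t,x^*(t))\bigl(x(t)-x^*(t)\bigr),
\]
and the adjoint equation in (3) rewrites $\partial_2 f^0(t,x^*)$ as $\dot\eta + \eta A$. A short calculation using \eqref{eq:linear:CS} identifies $(\dot\eta+\eta A)(x-x^*)$ with $\tfrac{d}{dt}\bigl[\eta(x-x^*)\bigr] - \eta\bigl[g(t,u)-g(t,u^*)\bigr]$. Integrating from $a$ to $b$ and exploiting $x(a)=x^*(a)=x_a$ leaves a boundary contribution $\eta(b)(x(b)-x^*(b))$ together with an unwanted control-coupling integral.

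For the control part, I would apply the maximality condition pointwise. Since $H(t,x,u,\eta)$ depends on $u$ only through the term $-g^0(t,u)+\eta\,g(t,u)$, the maximality condition is equivalent to
\[
g^0(t,u(t)) - g^0(t,u^*(t)) \;\geq\; \eta(t)\bigl[g(t,u(t))-g(t,u^*(t))\bigr] \quad \text{a.e. } t\in[a,b].
\]
Adding this inequality to the state-part bound, the two control-coupling integrals cancel exactly and one obtains
\[
C[u]-C[u^*] \;\geq\; \eta(b)\bigl(x(b)-x^*(b)\bigr).
\]
To close the argument I would invoke the transversality hypothesis: because $\Pi$ is closed and convex and $\eta(b)$ is an inward normal to $\Pi$ at $x^*(b)$, one has $\eta(b)\bigl(y-x^*(b)\bigr)\geq 0$ for every $y\in\Pi$, in particular for the admissible endpoint $y=x(b)$.

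The main obstacle is less one of technical depth than one of orchestration: three separate inequalities (convex subgradient, Hamiltonian maximality, inward-normal duality) must be combined so that the auxiliary terms produced by one are annihilated by another. In particular, care is needed to verify that maximization of $H$ in $u$ is genuinely insensitive to the $x$-dependent part of $H$, and that the continuity hypotheses in (1) together with $x\in W^{1,\infty}$ and $\eta$ absolutely continuous justify both the pointwise subgradient step and the integration by parts over $[a,b]$.
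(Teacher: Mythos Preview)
Your argument is correct and is exactly the classical proof of this sufficient condition: convexity of $f^0$ in $x$ gives the subgradient inequality at $x^*$, the adjoint equation converts $\partial_2 f^0(t,x^*)$ into $\dot\eta+\eta A$, integration by parts against $x-x^*$ produces the boundary term $\eta(b)(x(b)-x^*(b))$ plus a control-coupling integral, the maximality condition (which indeed involves $u$ only through $-g^0(t,u)+\eta\,g(t,u)$) supplies exactly the reverse inequality needed to absorb that integral, and the inward-normal condition on $\eta(b)$ makes the boundary term nonnegative because $\Pi$ is closed and convex. The regularity hypotheses in (1) together with $x,x^*\in W^{1,\infty}$ and the absolute continuity of $\eta$ are enough to justify every step.

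One remark on the comparison you asked for: the paper does \emph{not} give its own proof of this theorem. It is stated there as a quotation of Theorem~5, Section~5.2 of Lee and Markus, \emph{Foundations of Optimal Control Theory}, and is used purely as a tool in the proof of the main result (Theorem~\ref{theo_delay}). Your write-up is essentially the argument one finds in that reference, so there is nothing to contrast.
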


\begin{remark}
\label{vacuous}
Note that if $\Pi=\{x_b\}$, then the transversality condition 
of Theorem~\ref{theo_suf_linear} is vacuous, because $\Pi$ has a single point.
If $\Pi=\mathbb{R}^n$, then $\eta(b)=[
\begin{matrix}
0 & \cdots & 0
\end{matrix}
]_{1\times n}$.
\end{remark}


\section{Delayed state-linear optimal control problem}
\label{sec:delayOC}

In this paper we are interested in state-linear optimal control problems 
with discrete time delays $r \geq 0$ in the state variables 
$x(t) \in \mathbb{R}^n$ and $s \geq 0$ in the control variables 
$u(t) \in \mathbb{R}^m$, $(r, s) \neq (0, 0)$. The delayed 
state-linear optimal control problem ($LP_D$) consists in
\begin{equation*}
\min\ \ C_D[u]=\int_{a}^{b}f^0(t,x(t),x(t-r))+g^0(t,u(t),u(t-s))dt
\end{equation*}
subject to the delayed differential system
\begin{equation}
\label{eq:Delay:linear:CS}
\dot{x}(t)=A(t)x(t)+A_D(t)x(t-r)+g(t,u(t))+g_D(t,u(t-s))
\end{equation}
with the following initial functions
\begin{equation}
\label{eq:InitCond:delayControl}
\begin{split}
x(t)&=\varphi(t),\ t\in[a-r,a],\\
u(t)&=\psi(t),\ t\in[a-s,a[,
\end{split}
\end{equation}
where $x(t)\in\mathbb{R}^n$ for each $t\in[a-r,b]$ 
and $u(t)\in\Omega\subseteq\mathbb{R}^m$ for each $t\in[a-s,b]$.

\begin{definition}
An admissible process to problem ($LP_D$) is given by a pair 
of functions $(x,u)\in W^{1,\infty}([a-r,b],\mathbb{R}^n)
\times L^{\infty}([a-s,b],\mathbb{R}^m)$ that satisfies 
conditions \eqref{eq:Delay:linear:CS}--\eqref{eq:InitCond:delayControl}.
\end{definition}


\section{Main Result}
\label{sect_mainres}

In what follows, we assume that the time delays $r$ and $s$ 
respect the following commensurability assumption.

\begin{assump}[Commensurability assumption]
\label{assump}
We consider $r,s\geq0$ not simultaneously equal to zero 
and commensurable, that is, 
$$
(r,s)\neq(0,0)
$$ 
and
\begin{equation*}
\frac{r}{s}\in\mathbb{Q}\ \text{ for }\ s>0
\ \text{ or }\ \frac{s}{r}\in\mathbb{Q}\ \text{ for }\ r>0.
\end{equation*}
\end{assump}
\begin{remark}
The commensurability assumption holds for any couple of rational 
numbers $(r, s)$ for which at least one number is nonzero \cite{Gollmann}.
\end{remark}

\begin{theorem}
\label{theo_delay}
Consider problem ($LP_D$) and assume that
\begin{enumerate}
\item functions $f^0$, $\partial_2f^0$, $\partial_3f^0$, $g^0$, 
$g$, $g_D$, $A$ and $A_D$ are continuous for all their arguments;

\item $f^0(t,x,x_r)$ is a convex function in $(x,x_r)\in\mathbb{R}^{2n}$ 
for each $t\in[a,b]$;

\item for almost all $t\in[a,b]$, $u^*$ is a control with response $x^*$ 
that satisfies the \emph{maximality condition}
\begin{equation}
\label{max_cond_delayed_theo}
\begin{split}
  H_D^1&(t,x(t),x(t-r),u^*(t),u^*(t-s),\eta(t))\\
 &+H_D^0(t+s,x(t+s),x(t+s-r),u^*(t+s),u^*(t),\eta(t+s))\chi_{[a,b-s]}(t)\\
=&\max_{u\in\Omega}\{H_D^1(t,x(t),x(t-r),u,u^*(t-s),\eta(t))\\
&+H_D^0(t+s,x(t+s),x(t+s-r),u^*(t+s),u,\eta(t+s))\chi_{[a,b-s]}(t)\},
\end{split}
\end{equation}
where
\begin{equation*}
\begin{split}
H_D^p(t,x,y,u,v,\eta)=&-[f^0(t,x,y)+g^0(t,u,v)]\\
&+\eta \left[A(t)x+A_D(t)y+pg(t,u)+(1-p)g_D(t,v)\right]
\end{split}
\end{equation*}
for $p\in\{0,1\}$, and $\eta(t)$ is any nontrivial solution 
of the \emph{adjoint system}
\begin{equation*}
\begin{split}
\dot{\eta}(t)=&\ \partial_2f^0(t,x^*(t),x^*(t-r))+\partial_3
f^0(t+r,x^*(t+r),x^*(t))\chi_{[a,b-r]}(t)\\
&-\eta(t)A(t)-\eta(t+r)A_D(t+r)\chi_{[a,b-r]}(t)
\end{split}
\end{equation*}
that satisfies the \emph{transversality condition}
$\eta(b)=
[
\begin{matrix}
0 & \cdots & 0
\end{matrix}
]_{1\times n}
$.
\end{enumerate}
Then, $u^*$ is an optimal control that leads to the minimal cost $C_D[u^*]$.
\end{theorem}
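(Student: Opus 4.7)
The plan is to follow the Guinn--G\"{o}llmann--Kern--Maurer reduction strategy outlined in the introduction: transform $(LP_D)$ into an equivalent non-delayed state-linear optimal control problem in augmented variables, and then invoke Theorem~\ref{theo_suf_linear}.

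By Assumption~\ref{assump} I can choose a common step size $h>0$ and positive integers $N_r,N_s$ such that $r=N_r h$ and $s=N_s h$; after a further refinement I may also take $N:=(b-a)/h\in\mathbb{N}$. On the reference interval $[a,a+h]$ I set
\begin{equation*}
X_i(t) := x(t+(i-1)h),
\qquad U_i(t) := u(t+(i-1)h),
\qquad i=1,\dots,N.
\end{equation*}
The delayed dynamics~\eqref{eq:Delay:linear:CS} then become an $nN$-dimensional ODE without delays for the block state $X=(X_1,\dots,X_N)$, in which each $\dot{X}_i(t)$ depends only on $X_i(t)$, $X_{i-N_r}(t)$, $U_i(t)$ and $U_{i-N_s}(t)$, and every block index $i\le 0$ is absorbed into the prescribed histories $\varphi$ and $\psi$. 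Splitting the integral in $C_D[u]$ into $N$ subintervals of length $h$ and changing variables rewrites the cost as a single integral on $[a,a+h]$ of a separable sum in the blocks, which is of the form~\eqref{eq:linear:cost}. Finally, the condition $X_1(a)=\varphi(a)$ together with the continuity constraints $X_{i+1}(a)=X_i(a+h)$ for $i=1,\dots,N-1$ is equivalent to a single closed convex terminal constraint $X(a+h)\in\Pi$ (a linear manifold), once the still-unknown components of $X(a)$ are treated as decision variables.

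The hypotheses of Theorem~\ref{theo_suf_linear} for this augmented problem follow directly from those listed in the statement: continuity of $f^0,\partial_2 f^0,\partial_3 f^0,g^0,g,g_D,A,A_D$ gives the continuity required by condition~1, and the joint convexity of $f^0(t,\cdot,\cdot)$ in $(x,y)$ makes each summand $f^0(t+(i-1)h,X_i,X_{i-N_r})$ convex in $X$, hence their sum is convex in $X$. Setting the augmented costate block $H_i(t):=\eta(t+(i-1)h)$, the pointwise augmented maximality condition decomposes as a sum over $i$; regrouping the terms that contain the same physical control value $u^*(\tau)$ produces~\eqref{max_cond_delayed_theo}, because $u^*(\tau)$ appears in the $g$-term of one block (the $H_D^1$ contribution at physical time $\tau$) and, provided $\tau+s\le b$, also in the $g_D$-term of the block $N_s$ positions later (the $H_D^0$ contribution at physical time $\tau+s$), which is exactly what the indicator $\chi_{[a,b-s]}$ records. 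An identical block-by-block rearrangement of the augmented adjoint equation yields the stated delayed adjoint system, with $\chi_{[a,b-r]}$ coming from the terms $\partial_3 f^0(\cdot,\cdot,x^*(t-r))$ and $A_D(t)x(t-r)$; the transversality $\eta(b)=0$ follows from the fact that the last block $X_N(a+h)$ is free, which corresponds to $\Pi=\mathbb{R}^n$ in the sense of Remark~\ref{vacuous}.

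The main obstacle is combinatorial rather than analytical. I need to keep the indexing consistent so that: (i) out-of-range blocks are correctly absorbed into the initial data $\varphi$ and $\psi$; (ii) the patching conditions are identified with a single closed convex terminal constraint, without introducing spurious initial constraints that would conflict with the single-point initial condition~\eqref{eq:InitCond}; and (iii) the regrouping of terms involving the same $u^*(\tau)$ and the same $x^*(\tau)$ in the augmented Hamiltonian and in the augmented adjoint identities reproduces exactly the characteristic functions $\chi_{[a,b-s]}$ and $\chi_{[a,b-r]}$ that appear in the statement. Once this index bookkeeping is in place, Theorem~\ref{theo_suf_linear} applied to the augmented problem gives optimality of the block control $U^*=(U_1^*,\dots,U_N^*)$, and reassembling the blocks shows that $u^*$ is optimal for $(LP_D)$ with minimal cost $C_D[u^*]$.
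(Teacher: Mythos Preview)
Your proposal follows essentially the same route as the paper: the Guinn--G\"ollmann--Kern--Maurer block construction on a common step $h$ with $r=N_rh$, $s=N_sh$, $b-a=Nh$, reduction of $(LP_D)$ to an augmented non-delayed state-linear problem on $[a,a+h]$, and an application of Theorem~\ref{theo_suf_linear}. The verification of continuity and convexity and the blockwise unpacking that recovers the maximality condition~\eqref{max_cond_delayed_theo}, the delayed adjoint equation, and the transversality $\eta(b)=0$ all match the paper's argument line for line.

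The one substantive divergence is in how the patching constraints $X_{i+1}(a)=X_i(a+h)$ are fitted into the format of Theorem~\ref{theo_suf_linear}. You propose to let the unknown components $X_2(a),\dots,X_N(a)$ be ``decision variables'' and to encode continuity as a linear-manifold terminal constraint $X(a+h)\in\Pi$. As stated, however, Theorem~\ref{theo_suf_linear} requires a \emph{fixed} initial point $x(a)=x_a$; free initial components fall outside its scope, so this step, which you correctly flag as the ``main obstacle,'' is not yet closed. The paper resolves it differently: it fixes \emph{all} initial block values to the candidate optimal values $\xi_i(a)=x^*(a+ih)$ and takes the terminal set componentwise as $\tilde\Pi_i=\{\xi_i^*(a+h)\}$ for $i=0,\dots,N-2$ (singletons, so by Remark~\ref{vacuous} the corresponding transversality conditions are vacuous and $\Lambda^i(a+h)$ is unconstrained) and $\tilde\Pi_{N-1}=\mathbb{R}^n$ (which forces $\Lambda^{N-1}(a+h)=0$, i.e.\ $\eta(b)=0$). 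The interpretation of this choice as $N$ sequential subproblems, each with fixed initial and terminal data except the last, is spelled out in Remark~\ref{remark_transv_cond}. If you replace your linear-manifold/free-initial-data device by the paper's singleton terminal sets, the rest of your bookkeeping goes through unchanged.
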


\begin{proof}
We transform the delayed state-linear optimal control problem ($LP_D$) 
into an equivalent non-delayed state-linear optimal control (LP) type problem,  
using the approach of \cite{Gollmann,Guinn}, and then we apply 
Theorem~\ref{theo_suf_linear}. Without loss of generality, 
we assume the first case of Assumption~\ref{assump}, that is,
$\displaystyle\frac{r}{s}\in\mathbb{Q}$ for $r>0$ and $s>0$.
Consequently, there exist $k,l\in\mathbb{N}$ such that
\begin{equation*}
\frac{r}{s}=\frac{k}{l}
\Leftrightarrow
rl=sk
\Leftrightarrow
\frac{r}{k}=\frac{s}{l}.
\end{equation*}
Thus, let us divide the interval $[a,b]$ into $N$ subintervals 
of amplitude $h:=\displaystyle\frac{r}{k}=\frac{s}{l}$.
We can note that
\begin{equation*}
r=hk\ \text{ and }\ s=hl.
\end{equation*}
Furthermore, we also assume that 
\begin{equation}
\label{hip_h}
a+hN=b\ \text{ and }\ N>2k+1,
\end{equation}
with $N\in\mathbb{N}$.

\begin{remark}
If $b-a$ is not a multiple of $h$ ($b-a\neq hN$), then we can study problem 
($LP_D$) for $t\in[a,\tilde{b}]$, where $\tilde{b}$ is the smallest multiple 
of $h$, which is greater than $b$. Thus, we also study problem ($LP_D$) 
for $t\in[a,b]$, because $b<\tilde{b}$.
\end{remark}

For $i=0,\ldots,N-1$ and for $t\in[a,a+h]$, 
we define new variables
\begin{equation*}
\xi_i(t)=x(t+hi)\ \text{ and }\ \theta_i(t)=u(t+hi).
\end{equation*}
\begin{figure}[htp]
\begin{center}
\includegraphics[scale=0.8]{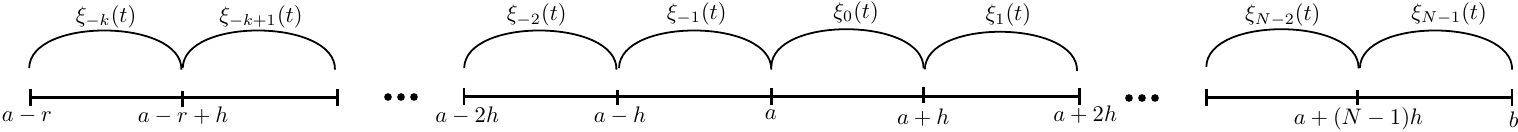}
\caption{Scheme of new state variables.}
\label{esquema}
\end{center}
\end{figure}
In Figure~\ref{esquema}, we can observe a simple scheme
for the new state variables. The idea is similar 
for the new control variables. We transform the delayed 
state-linear problem ($LP_D$) into an equivalent non-delayed 
state-linear problem $(\overline{LP})$, which consists to
\begin{equation}
\label{LP-}
\min\ \ \overline{C}[\theta]=\int_{a}^{a+h}
\sum_{i=0}^{N-1}[f^0(t+hi,\xi_i(t),\xi_{i-k}(t))
+g^0(t+hi,\theta_i(t),\theta_{i-l}(t))]dt
\end{equation}
subject to the non-delayed differential system
\begin{equation*}
\dot{\xi_i}(t)=A(t+hi)\xi_i(t)+A_D(t+hi)\xi_{i-k}(t)
+g(t+hi,\theta_i(t))+g_D(t+hi,\theta_{i-l}(t)),
\end{equation*}
$i=0,\ldots,N-1$, and to the initial functions
\begin{equation*}
\begin{split}
& \xi_i(t)=\varphi(t+hi),\ i=-k,\ldots,-1, \quad t\in[a,a+h],\\
& \theta_i(t)=\psi(t+hi),\ i=-l,\ldots,-1,\quad t\in[a,a+h[, \\
& \xi_i(a+h)=\xi_{i+1}(a),\ i=0,\ldots,N-2.
\end{split}
\end{equation*}
Consider that
\begin{equation*}
\begin{split}
&\xi(t)=
\left[\begin{matrix}
\xi_0(t)\\
\xi_1(t)\\
\vdots\\
\xi_{N-1}(t)
\end{matrix}\right],\
\xi^-(t)=
\left[\begin{matrix}
\xi_{-k}(t)\\
\xi_{1-k}(t)\\
\vdots\\
\xi_{-1}(t)
\end{matrix}\right],\
\theta(t)=
\left[\begin{matrix}
\theta_0(t)\\
\theta_1(t)\\
\vdots\\
\theta_{N-1}(t)
\end{matrix}\right]
\text{ and }
\theta^-(t)=
\left[\begin{matrix}
\theta_{-l}(t)\\
\theta_{1-l}(t)\\
\vdots\\
\theta_{-1}(t)
\end{matrix}\right].
\end{split}
\end{equation*}
Observe that the dimensions of $\xi(t)$, $\xi^-(t)$, $\theta(t)$ 
and $\theta^-(t)$ are $Nn\times1$, $kn\times1$, $Nm\times1$ 
and $lm\times1$, respectively. Note also that $\xi$ and $\theta$ 
represent optimization variables and $\xi^-$ and $\theta^-$ not. 
We know, a priori, the expressions of $\xi^-(t)$, $t\in[a,a+h]$, 
and $\theta^-(t)$, $t\in[a,a+h[$. Let us write the objective function expressed 
in \eqref{LP-} as a function of the type presented in \eqref{eq:linear:cost}:
\begin{equation*}
\begin{split}
&\sum_{i=0}^{N-1} f^0(t+hi,\xi_i(t),\xi_{i-k}(t))\\
=& f^0(t,\xi_0(t),\xi_{-k}(t))+f^0(t+h,\xi_1(t),\xi_{1-k}(t))\\
&+\ldots+f^0(t+h(k-1),\xi_{k-1}(t),\xi_{-1}(t))+f^0(t+hk,\xi_k(t),\xi_0(t))\\
&+\ldots+f^0(t+h(N-1),\xi_{N-1}(t),\xi_{N-1-k}(t)).
\end{split}
\end{equation*}
As $\xi_i$, $i=-k,\ldots,-1$, and $h$ are known,
$\sum_{i=0}^{N-1}f^0(t+hi,\xi_i(t),\xi_{i-k}(t))=F^0(t,\xi(t))$.
Similarly, we can write 
$\sum_{i=0}^{N-1}g^0(t+hi,\theta_i(t),\theta_{i-l}(t))=G^0(t,\theta(t))$.
Consequently,
\begin{equation*}
\begin{split}
&\int_{a}^{a+h}\sum_{i=0}^{N-1}[f^0(t+hi,\xi_i(t),\xi_{i-k}(t))
+g^0(t+hi,\theta_i(t),\theta_{i-l}(t))]dt\\
=&\int_{a}^{a+h}[F^0(t,\xi(t))+G^0(t,\theta(t))]dt.
\end{split}
\end{equation*}
In order to apply Theorem~\ref{theo_suf_linear}, 
we have to write the set of constraints
\begin{equation}
\label{eq_state_constraints_PLD}
\dot{\xi_i}(t)=A(t+hi)\xi_i(t)+A_D(t+hi)\xi_{i-k}(t)
+g(t+hi,\theta_i(t))+g_D(t+hi,\theta_{i-l}(t)),
\end{equation}
$i=0,\ldots,N-1$, in the form
\begin{equation}
\label{eq_state_constraint_theo}
\dot{\xi}(t)=\tilde{A}(t)\xi(t)+\tilde{G}(t,\theta(t)).
\end{equation}
For $i=0,\ldots,N-1$, consider that $t_i=t+hi$. Thus, we have
\begin{eqnarray*}
\left[
\begin{matrix}
A(t_0)\xi_0(t)\\
A(t_1)\xi_1(t)\\
\vdots\\
A(t_{N-1})\xi_{N-1}(t)\\
\end{matrix}
\right]_{Nn\times1}
&=& \left[
\begin{matrix}
A(t_0) & \boldsymbol{0} & \cdots & \cdots & \boldsymbol{0}\\
\boldsymbol{0} & A(t_1) & \boldsymbol{0} & \cdots & \boldsymbol{0}\\
\vdots & \ddots & \ddots & \ddots & \vdots\\
\vdots & \ddots & \ddots & \ddots & \boldsymbol{0}\\
\boldsymbol{0} & \cdots & \cdots & \boldsymbol{0} & A(t_{N-1})
\end{matrix}
\right]
\times
\left[\begin{matrix}
\xi_0(t)\\
\xi_1(t)\\
\vdots\\
\xi_{N-1}(t)
\end{matrix}\right]\\
&=& M(t)\xi(t)
\end{eqnarray*}
and
\begin{eqnarray*}
&&\left[
\begin{matrix}
A_D(t_0)\xi_{-k}(t)\\
A_D(t_1)\xi_{1-k}(t)\\
\vdots\\
A_D(t_k)\xi_0(t)\\
\vdots\\
A_D(t_{N-1})\xi_{N-1-k}(t)\\
\end{matrix}
\right]_{Nn\times1}\\
&=&\left[
\begin{matrix}
A_D(t_0) & \boldsymbol{0} & \cdots & \cdots & \cdots & \cdots & \boldsymbol{0}\\
\boldsymbol{0} & A_D(t_1) & \boldsymbol{0} & \cdots & \cdots & \cdots & \boldsymbol{0}\\
\vdots & \ddots & \ddots & \ddots & \ddots & \ddots & \vdots\\
\boldsymbol{0} & \cdots & \boldsymbol{0} & A_D(t_k) & \boldsymbol{0} & \cdots & \boldsymbol{0}\\
\vdots & \ddots & \ddots & \ddots & \ddots & \ddots & \vdots\\
\vdots & \ddots & \ddots & \ddots & \ddots & \ddots & \boldsymbol{0}\\
\boldsymbol{0} & \cdots & \cdots & \cdots & \cdots & \boldsymbol{0} & A_D(t_{N-1})
\end{matrix}
\right]
\times
\left[\begin{matrix}
\xi_{-k}(t)\\
\xi_{1-k}(t)\\
\vdots\\
\xi_0(t)\\
\vdots\\
\xi_{N-1-k}(t)
\end{matrix}\right]\\
&=&\left[
\begin{matrix}
&  &  & \boldsymbol{0}_{kn\times Nn} &  & & & \\
A_D(t_k) & \boldsymbol{0} & \cdots & \cdots & \cdots & \cdots & \cdots & \boldsymbol{0}\\
\boldsymbol{0} & A_D(t_{k+1}) & \boldsymbol{0} & \cdots & \cdots & \cdots & \cdots & \boldsymbol{0} \\
\vdots & \ddots & \ddots & \ddots & \ddots & \ddots & \ddots & \vdots\\
\boldsymbol{0} & \cdots & \boldsymbol{0} & A_D(t_{N-1}) & \boldsymbol{0} & \cdots & \cdots & \boldsymbol{0}\\
\end{matrix}
\right]
\times
\left[\begin{matrix}
\xi_0(t)\\
\vdots\\
\xi_{N-1-k}(t)\\
\vdots\\
\xi_{N-1}(t)
\end{matrix}\right]\\
&&+
\left[
\begin{matrix}
A_D(t_0) & \boldsymbol{0} & \cdots & \cdots & \cdots & \cdots & \boldsymbol{0}\\
\boldsymbol{0} & A_D(t_1) & \boldsymbol{0} & \cdots & \cdots & \cdots & \boldsymbol{0}\\
\vdots & \ddots & \ddots & \ddots & \ddots & \ddots & \vdots\\
\boldsymbol{0} & \cdots & \boldsymbol{0} & A_D(t_{k-1}) & \boldsymbol{0} & \cdots & \boldsymbol{0}\\
&  &  &  \boldsymbol{0}_{(N-k)n\times Nn} &  &  &
\end{matrix}
\right]
\times
\left[\begin{matrix}
\xi_{-k}(t)\\
\vdots\\
\xi_{-1}(t)\\
\boldsymbol{0}_{(N-k)n\times1}
\end{matrix}\right]\\
&=& M_D(t)\xi(t)+M_D^{-}(t)
\left[
\begin{matrix}
\xi^{-}(t)\\
\boldsymbol{0}_{(N-k)n\times1}
\end{matrix}\right].
\end{eqnarray*}
Note that $M(t)$, $M_D(t)$ and $M_D^-(t)$ have dimension 
$Nn\times Nn$. Concluding, we have
\begin{equation*}
\tilde{A}(t)=M(t)+M_D(t).
\end{equation*}
Now, we write the sum of the third and fourth terms of 
\eqref{eq_state_constraints_PLD} as a function of $t$ and $\theta(t)$. 
Thus,
\begin{equation*}
\begin{split}
&\left[
\begin{matrix}
g(t_0,\theta_0(t))+g_D(t_0,\theta_{-l}(t))\\
g(t_1,\theta_1(t))+g_D(t_1,\theta_{1-l}(t))\\
\vdots\\
g(t_{l-1},\theta_{l-1}(t))+g_D(t_{l-1},\theta_{-1}(t))\\
g(t_l,\theta_l(t))+g_D(t_l,\theta_0(t))\\
\vdots\\
g(t_{N-1},\theta_{N-1}(t))+g_D(t_{N-1},\theta_{N-1-l}(t))
\end{matrix}
\right]\\
=&\left[
\begin{matrix}
g(t_0,\theta_0(t))\\
g(t_1,\theta_1(t))\\
\vdots\\
g(t_{l-1},\theta_{l-1}(t))\\
g(t_l,\theta_l(t))+g_D(t_l,\theta_0(t))\\
\vdots\\
g(t_{N-1},\theta_{N-1}(t))+g_D(t_{N-1},\theta_{N-1-l}(t))
\end{matrix}
\right]
+
\left[
\begin{matrix}
g_D(t_0,\theta_{-l}(t))\\
g_D(t_1,\theta_{1-l}(t))\\
\vdots\\
g_D(t_{l-1},\theta_{-1}(t))\\
0\\
\vdots\\
0
\end{matrix}
\right]\\
=&\ g_{\theta}(t,\theta(t))+g_{\theta^{-}}(t,\theta^{-}(t)).
\end{split}
\end{equation*}
As $\xi^{-}(t)$ and $\theta^{-}(t)$ are known, we have that
\begin{equation*}
\tilde{G}(t,\theta(t))=M_D^{-}(t)
\left[
\begin{matrix}
\xi^{-}(t)\\
\boldsymbol{0}_{(N-k)n\times1}
\end{matrix}
\right]
+ g_{\theta}(t,\theta(t))+g_{\theta^{-}}(t,\theta^{-}(t)).
\end{equation*}
Therefore, we have the set of constraints \eqref{eq_state_constraints_PLD} 
in form \eqref{eq_state_constraint_theo}. To apply Theorem~\ref{theo_suf_linear}, 
we have to ensure that
\begin{enumerate}
\item $F^0$, $\partial_2 F^0$, $G^0$, $\tilde{A}$ 
and $\tilde{G}$ are continuous for all 
$(t,\xi,\theta)\in[a,a+h]\times\mathbb{R}^{Nn+Nm}$;

\item $F^0(t,\xi)$ is a convex function in $\xi$ for each fixed $t\in[a,a+h]$;

\item $\theta^*$ is a control with response $\xi^*$ that satisfies 
the \emph{maximality condition}
\begin{equation*}
-G^0(t,\theta^*(t))+\Lambda(t)\tilde{G}(t,\theta^*(t))
=\max_{\theta\in\tilde{\Omega}}[-G^0(t,\theta)+\Lambda(t)\tilde{G}(t,\theta)]
\end{equation*}
for almost all $t\in[a,a+h]$. Note that $\tilde{\Omega}\subseteq\mathbb{R}^{Nm}$ 
and $\Lambda(t)$ is any nontrivial solution of the \emph{adjoint system}
\begin{equation*}
\dot{\Lambda}(t)=\partial_2 F^0(t,\xi^*(t))-\Lambda(t)\tilde{A}(t)
\end{equation*}
such that $\Lambda^i(a+h)$ is an inward normal vector of 
the closed convex set 
\begin{equation*}
\begin{split}
\tilde{\Pi}_i=
\begin{cases}
\{\xi_i^*(a+h)\},\ &\text{if }i=0,\ldots,N-2\\
\mathbb{R}^n,\ &\text{if }i=N-1
\end{cases}
\end{split}
\end{equation*}
at the boundary point $\xi_i^*(a+h)$ for $i=0,\ldots,N-1$.
\end{enumerate}
Thus, $\theta^*$ will be an optimal control that leads us 
to the minimal cost $\overline{C}[\theta^*]$.
From now on, we are going to analyze each hypothesis of Theorem~\ref{theo_delay}.
\begin{enumerate}
\item
\begin{enumerate}
\item We have that
\begin{equation*}
\begin{split}
F^0(t,\xi(t))&=\sum_{i=0}^{N-1}f^0(t+hi,\xi_i(t),\xi_{i-k}(t))\\
&=\sum_{i=0}^{N-1}f^0(t+hi,x(t+hi),x(t+h(i-k)))\\
&=\sum_{i=0}^{N-1}f^0(t+hi,x(t+hi),x(t+hi-hk))\\
&=\sum_{i=0}^{N-1}f^0(t+hi,x(t+hi),x(t+hi-r)).
\end{split}
\end{equation*}
By hypothesis, function $f^0$ is continuous with respect to all its arguments. 
Then, $F^0$ is continuous for all $(t,\xi)\in[a,a+h]\times\mathbb{R}^{Nn}$.

\item Having in mind that $N>2k+1$ (see \eqref{hip_h}), 
that is, $k<N-1-k$, then 
\begin{equation*}
\begin{split}
F^0(t,\xi(t))
=& f^0(t_0,\xi_0(t),\xi_{-k}(t))+f^0(t_1,\xi_1(t),\xi_{1-k}(t))\\
&+\ldots+f^0(t_{k-1},\xi_{k-1}(t),\xi_{-1}(t))+f^0(t_k,\xi_k(t),\xi_0(t))\\
&+f^0(t_{k+1},\xi_{k+1}(t),\xi_1(t))\\
&+\ldots+f^0(t_{N-1-k},\xi_{N-1-k}(t),\xi_{N-1-2k}(t))\\
&+\ldots+f^0(t_{N-1},\xi_{N-1}(t),\xi_{N-1-k}(t)).
\end{split}
\end{equation*}
So, for $i=0,\ldots,N-1-k$, we obtain
\begin{equation*}
\begin{split}
\frac{\partial F^0}{\partial \xi_i}(t,\xi(t))
=&\ \partial_2f^0(t+hi,\xi_i(t),\xi_{i-k}(t))
+\partial_3f^0(t+h(k+i),\xi_{k+i}(t),\xi_i(t))\\
=&\ \partial_2f^0(t+hi,x(t+hi),x(t+h(i-k)))\\
&+\partial_3f^0(t+h(k+i),x(t+h(k+i)),x(t+hi))\\
=&\ \partial_2f^0(t+hi,x(t+hi),x(t+hi-r))\\
&+\partial_3f^0(t+hi+r),x(t+hi+r),x(t+hi)).
\end{split}
\end{equation*}
For $i=0,\ldots,N-1-k$ and $t\in[a,a+h]$, we conclude that
$$
a\leq t+hi\leq a+h+h(N-1-k)=b-r.
$$
For $i=N-k,\ldots,N-1$ we have
\begin{equation*}
\begin{split}
\frac{\partial F^0}{\partial \xi_i}(t,\xi(t))
&=\partial_2f^0(t+hi,\xi_i(t),\xi_{i-k}(t))\\
&=\partial_2f^0(t+hi,x(t+hi),x(t+hi-r)).
\end{split}
\end{equation*}
As $i\in\{N-k,\ldots,N-1\}$ and $t\in[a,a+h]$, we obtain
\begin{equation*}
\begin{split}
& a+h(N-k)\leq t+hi\leq a+h+h(N-1)
\Leftrightarrow b-r \leq t+hi \leq b.
\end{split}
\end{equation*}
For each $t\in[a,b]$, there exists $j\in\{0,\ldots,N-1\}$ such that
$$
a+hj\leq t \leq a+h(j+1) 
\Leftrightarrow 
a\leq t-hj \leq a+h.
$$
Thus, let us define $t'\in[a,a+h]$ as being $t'=t-hj$.
Consequently, 
\begin{equation*}
\begin{split}
&\frac{\partial F^0}{\partial \xi_j}(t',\xi(t'))\\
=&\ \partial_2f^0(t'+hj,x(t'+hj),x(t'+hj-r))\\
&+\partial_3f^0(t'+hj+r),x(t'+hj+r),x(t'+hj))\chi(j)_{\{0,\ldots,N-1-k\}}\\
=&\ \partial_2f^0(t,x(t),x(t-r))+\partial_3f^0(t+r,x(t+r),x(t))\chi(t)_{[a,b-r]}.
\end{split}
\end{equation*}
Since $\partial_2f^0$ is continuous for all $(t,x,x_r)\in[a,b]\times\mathbb{R}^{2n}$ 
and function $\partial_3f^0$ is continuous for all $(t,x,x_r)\in[a,b-r]\times\mathbb{R}^{2n}$, 
then $\displaystyle\frac{\partial F^0}{\partial \xi}$ is continuous for all 
$(t,\xi)\in[a,a+h]\times\mathbb{R}^{Nn}$.

\item We have that
\begin{equation*}
\begin{split}
G^0(t,\theta(t))
&=\sum_{i=0}^{N-1}g^0(t+hi,\theta_i(t),\theta_{i-l}(t))\\
&=\sum_{i=0}^{N-1}g^0(t+hi,u(t+hi),u(t+h(i-l)))\\
&=\sum_{i=0}^{N-1}g^0(t+hi,u(t+hi),u(t+hi-hl))\\
&=\sum_{i=0}^{N-1}g^0(t+hi,u(t+hi),u(t+hi-s)).
\end{split}
\end{equation*}
By hypothesis, function $g^0$ is continuous for all $(t,u,u_s)\in[a,b]\times\mathbb{R}^{2m}$. 
Then, $G^0$ is continuous for all $(t,\theta)\in[a,a+h]\times\mathbb{R}^{Nm}$.

\item We know that $\tilde{A}(t)=M(t)+M_D(t)$.
As $A$ and $A_D$ are continuous for all $t\in[a,b]$ and $M(t)$ and $M_D(t)$ 
are depending on $A(t)$ for $t\in[a,b]$ and on $A_D(t)$ for $t\in[a+r,b]$, 
then $\tilde{A}$ is continuous for all $t\in[a,a+h]$.

\item Let us define function $u_s(t)$ by
\begin{equation*}
u_s(t)=u(t-s)
\end{equation*}
for all $t\in[a,b]$. We have already defined
\begin{equation*}
\tilde{G}(t,\theta(t))=M_D^{-}(t)
\left[
\begin{matrix}
\xi^{-}(t)\\
\boldsymbol{0}
\end{matrix}
\right]
+
g_{\theta}(t,\theta(t))+g_{\theta^{-}}(t,\theta^{-}(t)).
\end{equation*}
The matrix $M_D^{-}(t)$ is depending on the matrix $A_D(t)$ 
for $t\in[a,a+r]$. As $A_D(t)$ is continuous in the interval $[a,b]$, then
\begin{equation*}
M_D^{-}(t)\left[
\begin{matrix}
\xi^{-}(t)\\
\boldsymbol{0}
\end{matrix}\right]
\end{equation*}
is continuous for all $t\in[a,a+h]$. Function 
$g_{\theta}(t,\theta(t))+g_{\theta^{-}}(t,\theta^{-}(t))$ 
is continuous if, for each $i=0,\ldots,N-1$, the functions 
$g(t+hi,\theta_i(t))$ and $g_D(t+hi,\theta_{i-l}(t))$ are 
continuous for all $(t,\theta_i(t)),\ (t,\theta_{i-l}(t))
\in[a,a+h]\times\mathbb{R}^m$, respectively. We know that
$g(t+hi,\theta_i(t))=g(t+hi,u(t+hi))$
and
\begin{equation*}
\begin{split}
g_D(t+hi,\theta_{i-l}(t))
&= g_D(t+hi,u(t+h(i-l)))\\
&= g_D(t+hi,u(t+hi-s)),
\end{split}
\end{equation*}
$i=0,\ldots,N-1$. Moreover, as $g(t,u(t))$ and $g_D(t,u_s(t))$ 
are continuous for all $(t,u,u_s)\in[a,b]\times\mathbb{R}^{2m}$,
$\tilde{G}$ is continuous for all 
$(t,\theta)\in[a,a+h]\times\mathbb{R}^{Nm}$.
\end{enumerate}

\item As we know,
\begin{equation*}
F^0(t,\xi(t))=\sum_{i=0}^{N-1}f^0(t+hi,x(t+hi),x(t+hi-r))
\end{equation*}
for $t\in[a,a+h]$ and $f^0$ is convex in $(x,x_r)\in\mathbb{R}^{2n}$ 
for each $t\in[a,b]$. Then, $F^0$ is a convex function in $\xi$ 
for each fixed $t\in[a,a+h]$.

\item If $\theta^*$ is a control with response $\xi^*$ 
that satisfies the \emph{maximality condition}
\begin{equation*}
-G^0(t,\theta^*(t))+\Lambda(t)\tilde{G}(t,\theta^*(t))
=\max_{\theta\in\tilde{\Omega}}[-G^0(t,\theta)+\Lambda(t)\tilde{G}(t,\theta)]
\end{equation*}
for almost all $t\in[a,a+h]$, then
\begin{equation}
\label{eq_cond_max}
-G^0(t,\theta^*(t))+\Lambda(t)\tilde{G}(t,\theta^*(t))
\geq-G^0(t,\theta)+\Lambda(t)\tilde{G}(t,\theta)
\end{equation}
for almost all $t\in[a,a+h]$ and for all admissible $\theta\in\tilde{\Omega}$. 
If we consider that $\eta(t)=\Lambda^j(t-hj)$, then we have that
\begin{equation*}
\Lambda^j(t)
=\Lambda^j(t+hj-hj)
=\eta(t+hj)\Rightarrow\Lambda^j(t')
=\eta(t'+hj)
=\eta(t)
\end{equation*}
and
\begin{equation*}
\begin{split}
\Lambda^{j+l}(t) 
&= \Lambda^{j+l}(t+h(j+l)-h(j+l))\\
&= \Lambda^{j+l}(t+hj+s-h(j+l))\\
&= \eta(t+hj+s),
\end{split}
\end{equation*}
which implies
$\Lambda^{j+l}(t')=\eta(t'+hj+s)=\eta(t+s)$.
As equation~\eqref{eq_cond_max} is verified for all admissible 
$\theta\in\tilde{\Omega}$, we can choose an admissible variable 
$\overline{\theta}\in\tilde{\Omega}$ such that
\begin{equation*}
\begin{split}
\overline{\theta}_i
=
\begin{cases}
u^*(t'+hi),\ & i\neq j\\
u,\ & i=j
\end{cases},
\quad i=0,\ldots,N-1,
\end{split}
\end{equation*}
where $u$ is an admissible control of problem ($LP_D$). 
So, using inequality~\eqref{eq_cond_max} 
and considering $t'_i=t'+hi$, we have that
\begin{equation*}
\begin{split}
&-G^0(t',\theta^*(t'))+\Lambda(t')\tilde{G}(t',\theta^*(t'))
\geq-G^0(t',\overline{\theta})+\Lambda(t')\tilde{G}(t',\overline{\theta})\\
\Leftrightarrow& \sum_{i=0}^{N-1}\{-g^0(t'_i,\theta^*_i(t'),\theta^*_{i-l}(t'))
+\Lambda^i(t')[g(t'_i,\theta^*_i(t'))+g_D(t'_i,\theta^*_{i-l}(t'))]\}\\
&+\sum_{i=0}^{k-1}\Lambda^i(t')A_D(t'_i)\xi_{i-k}(t')\\
&\geq\sum_{i=0}^{N-1}\{-g^0(t'_i,\overline{\theta}_i,\overline{\theta}_{i-l})
+\Lambda^i(t')[g(t'_i,\overline{\theta}_i)+g_D(t'_i,\overline{\theta}_{i-l})]\}\\
&+\sum_{i=0}^{k-1}\Lambda^i(t')A_D(t'_i)\xi_{i-k}(t').
\end{split}
\end{equation*}
As the last sums of both sides of previous inequality are equal, we obtain
\begin{equation}
\label{ineq_max}
\begin{split}
\sum_{i=0}^{N-1}&\{-g^0(t'_i,\theta^*_i(t'),\theta^*_{i-l}(t'))
+\Lambda^i(t')[g(t'_i,\theta^*_i(t'))+g_D(t'_i,\theta^*_{i-l}(t'))]\}\\
&\geq\sum_{i=0}^{N-1}\{-g^0(t'_i,\overline{\theta}_i,\overline{\theta}_{i-l})
+\Lambda^i(t')[g(t'_i,\overline{\theta}_i)+g_D(t'_i,\overline{\theta}_{i-l})]\}.
\end{split} 
\end{equation}
Due to the choice of $\overline{\theta_i}$, $i=0,\ldots, N-1$, some terms of 
the left-hand side of inequality \eqref{ineq_max} cancel with other terms of the
right-hand side. Let us analyze the sums when we only consider the indexes of set 
$I=\{0,\ldots,N-1\}\backslash\{j,j+l\}$. For the first member, we have
\begin{equation*}
\begin{split}
&\sum_{i\in I}\{-g^0(t'_i,\theta^*_i(t'),\theta^*_{i-l}(t'))
+\Lambda^i(t')[g(t'_i,\theta^*_i(t'))+g_D(t'_i,\theta^*_{i-l}(t'))]\}\\
=&\sum_{i\in I}\{-g^0(t'_i,u^*(t'_i),u^*(t'_i-s))\}\\
&+\sum_{i\in I}\{\Lambda^i(t')[g(t'_i,u^*(t'_i))+g_D(t'_i,u^*(t'_i-s))]\}
\end{split}
\end{equation*}
while for the second we obtain
\begin{equation*}
\begin{split}
&\sum_{i\in I}\{-g^0(t'_i,\overline{\theta}_i,\overline{\theta}_{i-l})
+\Lambda^i(t')[g(t'_i,\overline{\theta}_i)+g_D(t'_i,\overline{\theta}_{i-l})]\}\\
=& \sum_{i\in I}\{-g^0(t'_i,u^*(t'_i),u^*(t'_i-s))\}\\
&+\sum_{i\in I}\{\Lambda^i(t')[g(t'_i,u^*(t'_i))+g_D(t'_i,u^*(t'_i-s))]\}.
\end{split}
\end{equation*}
Only the terms associated to the indexes $j, j+l\in \{0,\ldots, N-1\}$ are different. 
Therefore, inequality \eqref{ineq_max} is equivalent to
\begin{equation*}
\begin{split}
\sum_{i\in\{j,j+l\}} & \{-g^0(t'_i,\theta^*_i(t'),\theta^*_{i-l}(t'))
+\Lambda^i(t')[g(t'_i,\theta^*_i(t'))+g_D(t'_i,\theta^*_{i-l}(t'))]\}\\
&\geq\sum_{i\in\{j,j+l\}}\{-g^0(t'_i,\overline{\theta}_i,\overline{\theta}_{i-l})
+\Lambda^i(t')[g(t'_i,\overline{\theta}_i)+g_D(t'_i,\overline{\theta}_{i-l})]\}.
\end{split}
\end{equation*}
For $i=0,\ldots, N-1$, we know that $\overline{\theta}_i=u$, if $i=j$. 
Thus, by the above inequality, it follows that
\begin{eqnarray*}		 			 	
&&-g^0(t'+hj,u^*(t'+hj),u^*(t'+hj-s))\\
&&+\Lambda^j(t')[g(t'+hj,u^*(t'+hj))+g_D(t'+hj,u^*(t'+hj-s))]\\
&&-g^0(t'+hj+s,u^*(t'+hj+s),u^*(t'+hj))\chi_{\{0,\ldots,N-1-l\}}(j)\\
&&+\Lambda^{j+l}(t')[g(t'+hj+s,u^*(t'+hj+s))\\
&&+g_D(t'+hj+s,u^*(t'+hj))]\chi_{\{0,\ldots,N-1-l\}}(j)\\
&&\geq-g^0(t'+hj,u,u^*(t'+hj-s))\\
&&+\Lambda^j(t')[g(t'+hj,u)+g_D(t'+hj,u^*(t'+hj-s))]\\
&&-g^0(t'+hj+s,u^*(t'+hj+s),u)\chi_{\{0,\ldots,N-1-l\}}(j)\\
&&+\Lambda^{j+l}(t')[g(t'+hj+s,u^*(t'+hj+s))\\
&&+g_D(t'+hj+s,u)]\chi_{\{0,\ldots,N-1-l\}}(j).
\end{eqnarray*}
As $t'=t-hj\in[a,a+h]$ and $0\leq j \leq N-1-l$, then
\begin{equation*}
\begin{split}
0\leq hj \leq Nh-h-s
&\Leftrightarrow \ a\leq t'+hj \leq a+h+Nh-h-s\\
&\Leftrightarrow \ a\leq t'+hj\leq b-s.
\end{split}
\end{equation*}
Consequently, we have that
\begin{equation*}
\begin{split}
&-g^0(t,u^*(t),u^*(t-s))+\Lambda^j(t')[g(t,u^*(t))+g_D(t,u^*(t-s))]\\
&-g^0(t+s,u^*(t+s),u^*(t))\chi_{[a,b-s]}(t)\\
&+\Lambda^{j+l}(t')[g(t+s,u^*(t+s))+g_D(t+s,u^*(t))]\chi_{[a,b-s]}(t)\\
&\geq-g^0(t,u,u^*(t-s))+\Lambda^j(t')[g(t,u)+g_D(t,u^*(t-s))]\\
&-g^0(t+s,u^*(t+s),u)\chi_{[a,b-s]}(t)\\
&+\Lambda^{j+l}(t')[g(t+s,u^*(t+s))+g_D(t+s,u)]\chi_{[a,b-s]}(t).
\end{split}
\end{equation*}
As some terms cancel, we obtain
\begin{equation*}
\begin{split}
&-g^0(t,u^*(t),u^*(t-s))+\Lambda^j(t')g(t,u^*(t))\\
&-g^0(t+s,u^*(t+s),u^*(t))\chi_{[a,b-s]}(t)+\Lambda^{j+l}(t')g_D(t+s,u^*(t))\chi_{[a,b-s]}(t)\\
&\geq-g^0(t,u,u^*(t-s))+\Lambda^j(t')g(t,u)\\
&-g^0(t+s,u^*(t+s),u)\chi_{[a,b-s]}(t)+\Lambda^{j+l}(t')g_D(t+s,u)\chi_{[a,b-s]}(t).		 			 	
\end{split}
\end{equation*}
Using relations $\Lambda^j(t')=\eta(t)$ and $\Lambda^{j+l}(t')=\eta(t+s)$, we have that
\begin{equation}
\label{eq0_theodelay}
\begin{split}
&-g^0(t,u^*(t),u^*(t-s))+\eta(t)g(t,u^*(t))\\
&+[-g^0(t+s,u^*(t+s),u^*(t))+\eta(t+s)g_D(t+s,u^*(t))]\chi_{[a,b-s]}(t)\\
&\geq-g^0(t,u,u^*(t-s))+\eta(t)g(t,u)\\
&+[-g^0(t+s,u^*(t+s),u)+\eta(t+s)g_D(t+s,u)]\chi_{[a,b-s]}(t).
\end{split}	
\end{equation}
Attending to the definition of $H_D^p$, $p\in\{0,1\}$, the inequality 
\eqref{eq0_theodelay} is equivalent to the \emph{maximality condition} 
\eqref{max_cond_delayed_theo} of Theorem~\ref{theo_delay}.
Furthermore, we cannot forget that $\Lambda(t)$ 
is any nontrivial solution of the \emph{adjoint system}
\begin{equation}
\label{adj_syst_auxprob}
\dot{\Lambda}(t)=\partial_2F^0(t,\xi^*(t))-\Lambda(t)\tilde{A}(t)
\end{equation}
that satisfies the \emph{transversality condition} 
(see Remark~\ref{remark_transv_cond})
\begin{equation}
\label{trans_cond_auxprob}
\Lambda^{N-1}(a+h)=[
\begin{matrix}
0 & \cdots & 0
\end{matrix}
]_{1\times n}.
\end{equation}
As we know,
\begin{equation*}
\tilde{A}(t)= M(t)+M_D(t)
\end{equation*}
and
$
\Lambda(t)=
[
\begin{matrix}
\Lambda^0(t) & \Lambda^1(t) & \cdots & \Lambda^{N-1}(t)
\end{matrix}
],$
where $\Lambda^i(t)$ has dimension $1\times n$ for all $i\in\{0,\ldots,N-1\}$.
Consequently, by the adjoint system \eqref{adj_syst_auxprob}, we can write that
\begin{equation*}
\begin{split}
\dot{\Lambda}^i(t)=&\ \partial_2f^0(t+hi,\xi^*_i(t),\xi^*_{i-k}(t))\\
&+\partial_3f^0(t+h(i+k),\xi^*_{k+i}(t),\xi^*_i(t))
\chi_{\{0,\ldots,N-1-k\}}(i)-\Lambda^i(t)A(t+hi)\\
&-\Lambda^{i+k}(t)A_D(t+h(i+k))\chi_{\{0,\ldots,N-1-k\}}(i)\\
=&\ \partial_2f^0(t+hi,x^*(t+hi),x^*(t+hi-hk))\\
&+\partial_3f^0(t+hi+hk,x^*(t+hi+hk),x^*(t+hi))\chi_{\{0,\ldots,N-1-k\}}(i)\\
&-\Lambda^i(t)A(t+hi)-\Lambda^{i+k}(t)A_D(t+hi+hk)\chi_{\{0,\ldots,N-1-k\}}(i)\\
=&\ \partial_2f^0(t+hi,x^*(t+hi),x^*(t+hi-r))\\
&+\partial_3f^0(t+hi+r,x^*(t+hi+r),x^*(t+hi))\chi_{\{0,\ldots,N-1-k\}}(i)\\
&-\Lambda^i(t)A(t+hi)-\Lambda^{i+k}(t)A_D(t+hi+r)\chi_{\{0,\ldots,N-1-k\}}(i).
\end{split}
\end{equation*}
Furthermore, as $\eta(t)=\Lambda^j(t-hj)$, we conclude that
\begin{equation}
\label{eq1_theodelay}
\begin{split}
\dot{\eta}(t)=&\ \dot{\Lambda}^j(t-hj)\\
=&\ \partial_2f^0(t,x^*(t),x^*(t-r))+\partial_3
f^0(t+r,x^*(t+r),x^*(t))\chi_{[a,b-r]}(t)\\
&-\eta(t)A(t)-\eta(t+r)A_D(t+r)\chi_{[a,b-r]}(t).
\end{split}
\end{equation}
By equation \eqref{trans_cond_auxprob},
\begin{equation*}
\begin{split}
\Lambda^{N-1}(a+h)=
[
\begin{matrix}
0 & \cdots & 0
\end{matrix}
]_{1\times n}
\Leftrightarrow\ &\eta(a+h+h(N-1))=
[
\begin{matrix}
0 & \cdots & 0
\end{matrix}
]_{1\times n}\\
\Leftrightarrow\ &\eta(a+hN)=
[
\begin{matrix}
0 & \cdots & 0
\end{matrix}
]_{1\times n}.
\end{split}
\end{equation*}
As $a+hN=b$, we obtain the \emph{transversality condition}
\begin{equation}
\label{eq2_theodelay}
\eta(b)=
[
\begin{matrix}
0 & \cdots & 0
\end{matrix}
]_{1\times n}.
\end{equation}
With conditions \eqref{eq0_theodelay}, 
\eqref{eq1_theodelay} and \eqref{eq2_theodelay}, 
we obtain item $3$ of Theorem~\ref{theo_delay}. 
\end{enumerate}
The proof is complete.
\end{proof}

\begin{remark}
\label{remark_transv_cond}
We can note that:
(i) problems ($LP_D$) and ($\overline{LP}$) are equivalent;
(ii) the augmented and non-delayed problem ($\overline{LP}$) is defined for $t\in[a,a+h]$.
Even more, we can solve problem ($LP_D$) by solving $N$ sub-problems, each one with respect 
to each subinterval of $[a,b]$ with amplitude $h$. Then, we can concatenate the respective $N$ 
optimal solutions in order to obtain an optimal solution of ($LP_D$). Thus, we can solve 
problem ($LP_D$) by solving $N$ augmented and non-delayed sub-problems ($\overline{LP}_i$) 
associated to problem ($LP_D$), with $i=0,\ldots,N-1$. For $i\in\{0,\ldots,N-2\}$, the $(i+1)$th 
augmented and non-delayed sub-problem ($\overline{LP}_i$) consists to minimize
\begin{equation*}
\int\limits_{a}^{a+h}f^0(t_i,\xi_i(t),\xi_{i-k}(t))+g^0(t_i,\theta_i(t),\theta_{i-l}(t))dt
\end{equation*}
subject to
\begin{equation*}
\begin{split}
&\dot{\xi}_i(t)=\ A(t_i)\xi_i(t)+A_D(t_i)\xi_{i-k}(t)
+g(t_i,\theta_i(t))+g_D(t_i,\theta_{i-l}(t))\\
&\xi_i(a)=
\begin{cases}
\varphi(a), &\text{ if } i=0\\
\ \xi_{i-1}(a+h), &\text{ if } i=1,\ldots,N-2
\end{cases}\\
&\xi_i(a+h)\in\tilde{\Pi}_i=\{\xi_i^*(a+h)\}
\end{split}
\end{equation*}
for $t\in[a,a+h]$. Theorem~\ref{theo_suf_linear} can be applied and we can find 
an optimal pair $(\xi_i^*(\cdot),\theta_i^*(\cdot))$ in the interval of time $[a,a+h]$ 
that provides an optimal solution $(x^*(\cdot),u^*(\cdot))$ in the interval of time 
$[a+hi,a+h(i+1)]$. The set $\tilde{\Pi}_i$ has a single point. So, $\Lambda^i(a+h)$ 
is an inward normal vector of $\tilde{\Pi}_i$ at the boundary point $\xi_i^*(a+h)$
(recall Remark~\ref{vacuous}). The last augmented and non-delayed sub-problem 
($\overline{LP}_{N-1}$) consists to minimize
\begin{equation*}
\int\limits_{a}^{a+h}f^0(t_{N-1},\xi_{N-1}(t),
\xi_{N-1-k}(t))+g^0(t_{N-1},\theta_{N-1}(t),\theta_{N-1-l}(t))dt
\end{equation*}
subject to
\begin{equation*}
\begin{split}
&\dot{\xi}_{N-1}(t)= A(t_{N-1})\xi_{N-1}(t)
+A_D(t_{N-1})\xi_{N-1-k}(t)+g(t_{N-1},\theta_{N-1}(t))\\
&\ \ \ \ \ \ \ \ \ \ \ \ \ +g_D(t_{N-1},\theta_{N-1-l}(t))\\
&\xi_{N-1}(a)=\ \xi_{N-2}(a+h)\\
&\xi_{N-1}(a+h)\in\tilde{\Pi}_{N-1}=\mathbb{R}^n
\end{split}
\end{equation*}
for $t\in[a,a+h]$. Again, Theorem~\ref{theo_suf_linear} can be applied and we can find 
an optimal pair $(\xi_{N-1}^*(\cdot),\theta_{N-1}^*(\cdot))$ in interval of time $[a,a+h]$ 
that provides an optimal solution $(x^*(\cdot),u^*(\cdot))$ in the interval of time $[a+h(N-1),b]$. 
As $\tilde{\Pi}_{N-1}=\mathbb{R}^n$, then by Theorem~\ref{theo_suf_linear}
$\Lambda^{N-1}(a+h)
=[\begin{matrix}
0 & \cdots & 0
\end{matrix}
]_{1\times n}$.
\end{remark}


\section{An illustrative example}
\label{sect_example}

Let us consider problem $(P)$ given by
\begin{equation}
\label{example}
\tag{$P$}
\begin{split}
\min\ \ \ & C[u]=\int_{0}^{4}x(t)+100u^2(t)dt\\
\text{s.t.}\ \ & \dot{x}(t)=x(t)+x(t-2)-10u(t-1),\\
& x(t)=1,\ t\in[-2,0],\\
& u(t)=0,\ t\in[-1,0[,
\end{split}
\end{equation}
where $u(t)\in\Omega=\mathbb{R}$ for each $t\in[-1,4]$. Thus, 
we have that $n=m=1$, $a=0$, $b=4$, $r=2$, $s=1$, $f^0(t,x(t),x(t-2))=x(t)$, 
$g^0(t,u(t),u(t-1))=100u^2(t)$, $A(t)=A_D(t)=1$, $g(t,u(t))=0$ 
and $g_D(t,u(t-1))=-10u(t-1)$. Note that our functions respect 
hypothesis $1$ and $2$ of Theorem~\ref{theo_delay}. Let $\bar{u}$ 
be an admissible control of problem \eqref{example} and let us maximize function
\begin{equation*}
\begin{split}
&-g^0(t,u,\bar{u}(t-1))+\eta(t)g(t,u)\\
&+[-g^0(t+1,\bar{u}(t+1),u)+\eta(t+1)g_D(t+1,u)]\chi_{[0,3]}(t)\\
=&-100u^2+[-100\bar{u}^2(t+1)-10\eta(t+1)u]\chi_{[0,3]}(t)\\
=&
\begin{cases}
-100u^2-10\eta(t+1)u-100\bar{u}^2(t+1),\ & t\in[0,3]\\
-100u^2,\ & t\in\ ]3,4]
\end{cases}
\end{split}
\end{equation*}
with respect to $u\in\mathbb{R}$. We obtain 
\begin{equation*}
u(t)=-\frac{\eta(t+1)}{20}
\end{equation*}
for $t\in[0,3]$ and $u(t)=0$ for $t\in\ ]3,4]$. 
Furthermore, we know that $\eta(t)$ is any nontrivial solution of
\begin{equation*}
\begin{split}
\dot{\eta}(t)=&\ \partial_2f^0(t,x(t),x(t-2))+\partial_3
f^0(t+2,x(t+2),x(t))\chi_{[0,2]}(t)-\eta(t)A(t)\\
&-\eta(t+2)A_D(t+2)\chi_{[0,2]}(t)\\
\Leftrightarrow
\dot{\eta}(t)=&\ 1-\eta(t)-\eta(t+2)\chi_{[0,2]}(t)
=
\begin{cases}
1-\eta(t)-\eta(t+2),\ & t\in[0,2]\\
1-\eta(t),\ & t\in\ ]2,4]
\end{cases}
\end{split}
\end{equation*}
that satisfies the \emph{transversality condition} $\eta(4)=0$. 
The adjoint system is given by
\begin{equation}
\begin{split}
\label{adjoint_system}
\begin{cases}
\dot{\eta}(t)=
\begin{cases}
1-\eta(t)-\eta(t+2),\ & t\in[0,2]\\
1-\eta(t),\ & t\in\ ]2,4]\\
\end{cases}\\
\eta(4)=0.
\end{cases}
\end{split}
\end{equation}
For $t\in\ ]2,4]$, the solution of differential equation
\begin{equation*}
\begin{cases}
\dot{\eta}(t)
= 1-\eta(t)\\
\eta(4)=0
\end{cases}
\end{equation*}
is given by
\begin{equation*}
\eta(t)=1-e^{4-t}.
\end{equation*}
Knowing $\eta(t)$, $t\in\ ]2,4]$, and attending to the continuity 
of function $\eta$ for all $t\in[0,4]$, we can determine $\eta(t)$ 
for $t\in[0,2]$ solving the differential equation
\begin{equation*}
\begin{cases}
\dot{\eta}(t)
=1-\eta(t)-\eta(t+2)\\
\eta(2)=1-e^{4-2}=1-e^2
\end{cases}
\end{equation*}
for $t\in[0,2]$. Therefore,
\begin{equation*}
\eta(t)=e^{2-t}(t-e^2-1),\ t\in[0,2],
\end{equation*}
and, consequently, the solution of the adjoint system 
\eqref{adjoint_system} is given by
\begin{equation*}
\begin{split}
\eta(t)
=
\begin{cases}
e^{2-t}(t-e^2-1),\ & t\in [0,2]\\
1-e^{4-t},\ & t\in\ ]2,4].
\end{cases}
\end{split}
\end{equation*}
So, the control is given by
\begin{equation}
\begin{split}
\label{optimalcontrol}
u(t)=\frac{1}{20}
\begin{cases}
0,\ & t\in[-1,0[\\
e^{3-t}-e^{1-t}t,\ & t\in[0,1[\\
e^{3-t}-1,\ & t\in[1,3]\\
0,\ & t\in\ ]3,4].
\end{cases}
\end{split}
\end{equation}
Knowing the control, we can determine 
the state by solving the differential equation
\begin{equation*}
\begin{cases}
\dot{x}(t)=x(t)+x(t-2)-10u(t-1)\\
x(t)=1,\ t\in[-2,0].
\end{cases}
\end{equation*}
The state solution is
\begin{equation}
\label{optimalstate}
\begin{split}
x(t)=
\begin{cases}
1,\ & t\in[-2,0]\\[0.8em]
-1+2e^t,\ & t\in\ ]0,1]\\[0.8em]
\displaystyle\frac{(e^2+2e^4-2e^2t)e^{-t}-8+(17-2e^2)e^t}{8},\ & t\in\ ]1,2]\\[0.8em]
\displaystyle\frac{2e^{4-t}+4+(-47e^{-2}+17-2e^2+16e^{-2}t)e^t}{8},\ & t\in\ ]2,3]\\[0.8em]
\displaystyle\frac{(-e^6+e^4t)e^{-t}+4+(-51e^{-2}+24-2e^2+17e^{-2}t-2t)e^t}{8},\ & t\in\ ]3,4].
\end{cases}
\end{split}
\end{equation}
Such analytical expressions can be obtained with the help of a modern computer 
algebra system. We have used \textsf{Mathematica}.
In Figures~\ref{FigControlo} and \ref{FigState}, we observe that the numerical 
solutions for control and state, obtained using \textsf{AMPL} \cite{AMPL} and 
\textsf{IPOPT} \cite{IPOPT}, are in agreement with their analytical solutions, given by 
\eqref{optimalcontrol} and \eqref{optimalstate}, respectively. The numerical solutions 
were obtained using Euler's forward difference method in \textsf{AMPL} and \textsf{IPOPT}, 
dividing the interval of time $[0,4]$ into 2000 subintervals. The minimal cost is 
$$
\frac{23+e^2+34e^4-2e^6}{16}\simeq 67.491786.
$$
\begin{figure}[htp]
\begin{center}
\includegraphics[scale=0.3]{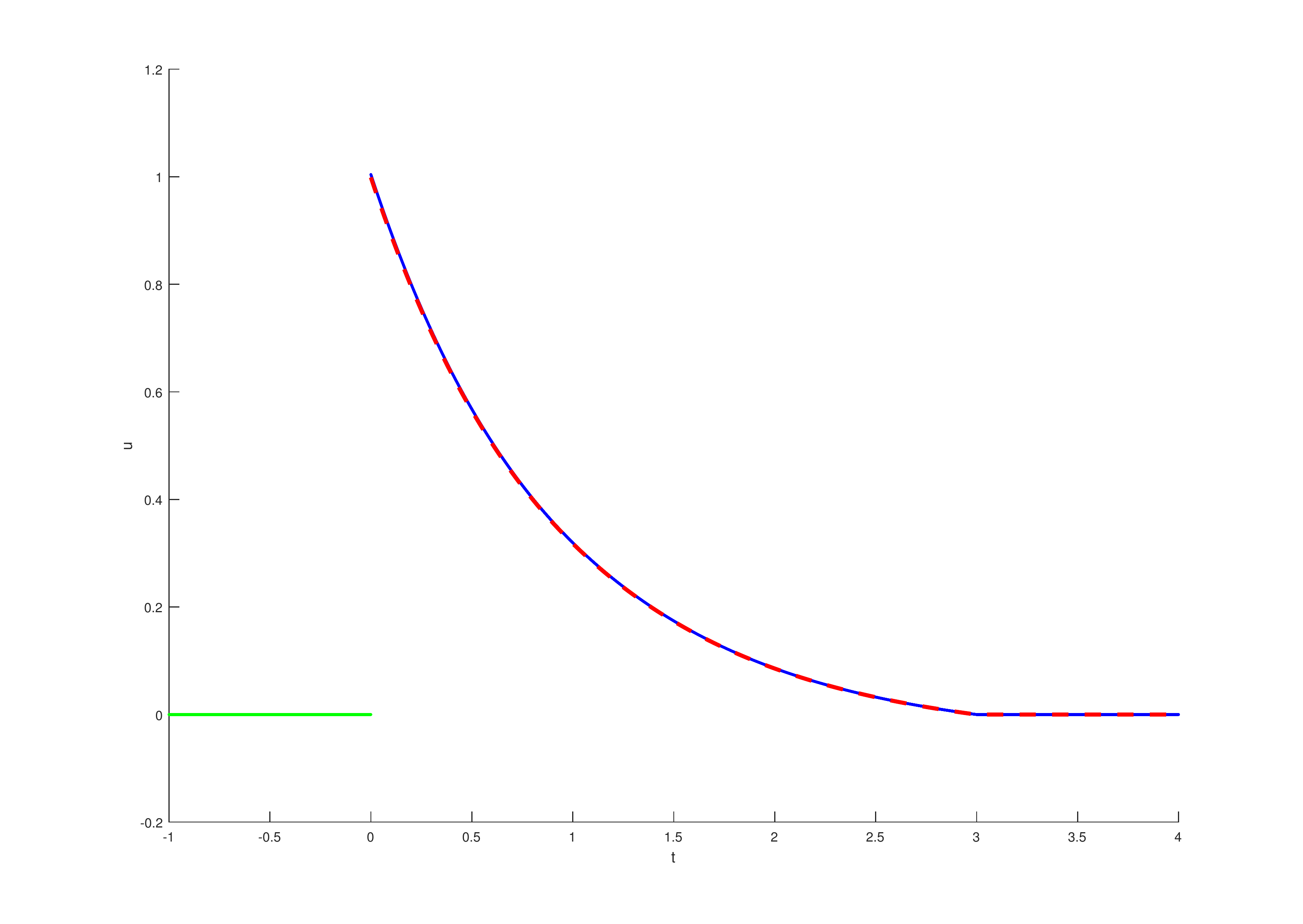}
\caption{Optimal control: green line -- initial data; 
blue line -- analytical solution; 
red dashed line -- numerical solution.}
\label{FigControlo}
\end{center}
\end{figure}
\begin{figure}[htp]
\begin{center}
\includegraphics[scale=0.3]{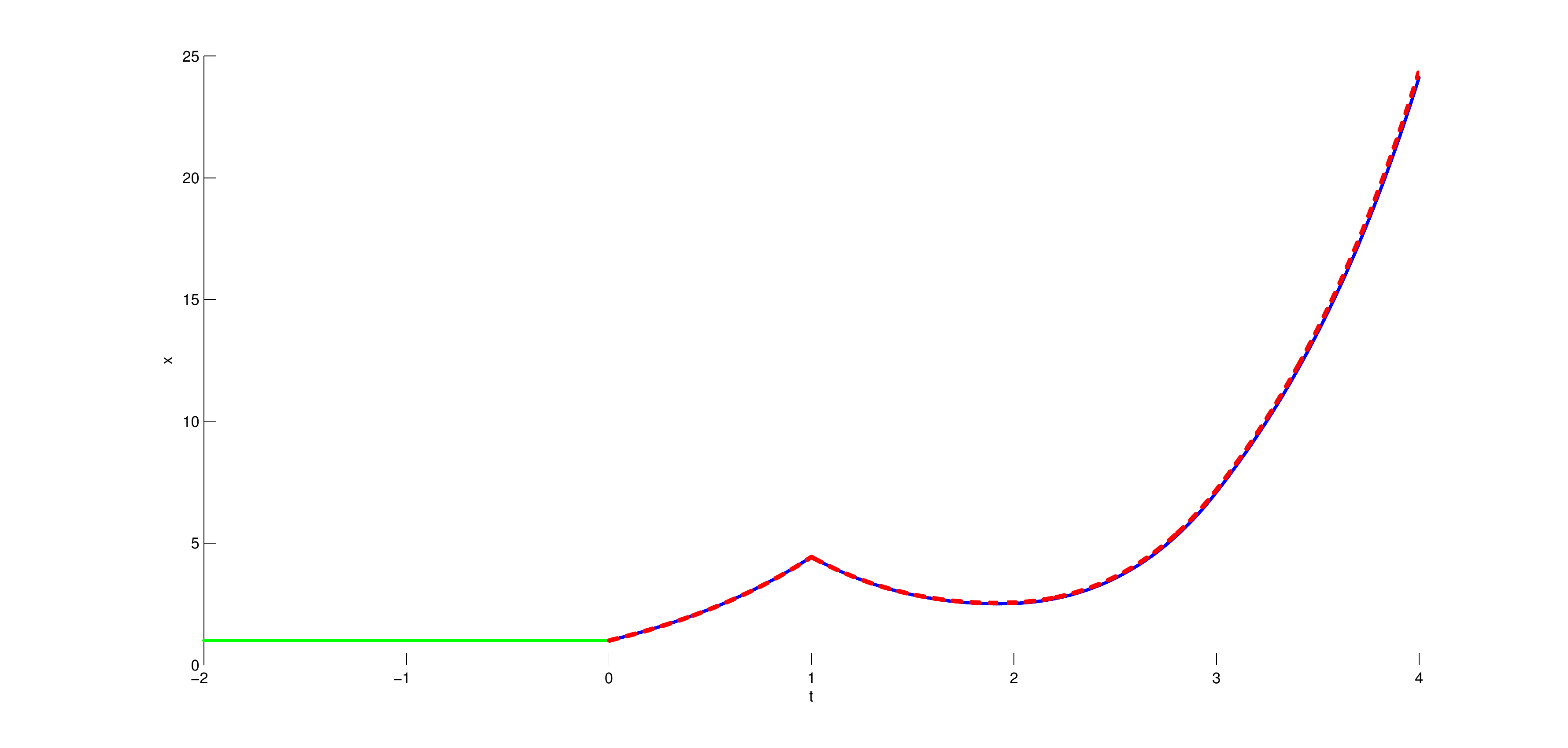}
\caption{Optimal state: green line -- initial data; 
blue line -- analytical solution; 
red dashed line -- numerical solution.}
\label{FigState}
\end{center}
\end{figure}


\section{Conclusion}
\label{sec:conclusion}

We considered a delayed state-linear optimal control problem. 
We proved a sufficient optimality condition for
problems with delays in both state and control variables. 
The proof is based on the transformation of the delayed 
state-linear optimal control problem into a non-delayed one, 
following the approach proposed in \cite{Guinn} and used 
in \cite{Gollmann}. Analogously to \cite{Gollmann}, we ensure 
the commensurability assumption between the, possibly different, 
state and control delays. An example is provided, which illustrates 
the usefulness of obtained sufficient optimality condition.


\section*{Acknowledgements}

This research was supported by the
Portuguese Foundation for Science and Technology (FCT)
within projects UID/MAT/04106/2019 (CIDMA)
and PTDC/EEI-AUT/2933/2014 (TOCCATA), funded by Project
3599 -- Promover a Produ\c{c}\~ao Cient\'{\i}fica e Desenvolvimento
Tecnol\'ogico e a Constitui\c{c}\~ao de Redes Tem\'aticas
and FEDER funds through COMPETE 2020, Programa Operacional
Competitividade e Internacionaliza\c{c}\~ao (POCI).
Lemos-Pai\~{a}o is also supported by the Ph.D.
fellowship PD/BD/114184/2016; Silva by national funds (OE), 
through FCT, I.P., in the scope of the framework contract foreseen 
in the numbers 4, 5 and 6 of the article 23, of the Decree-Law 57/2016, 
of August 29, changed by Law 57/2017, of July 19. The authors
are very grateful to a referee for carefully reading of their 
manuscript and for several constructive remarks.



\medskip


Received December 2017; revised April 2018; accepted January 2019.


\medskip


\end{document}